\newtheorem{theorem}{Theorem}
\newtheorem{corollary}{Corollary}
\newtheorem{lemma}{Lemma}
\newtheorem{remark}{Remark}
\newcommand{\NN}{\mathbb{N}}
\newcommand{\ZZ}{\mathbb{Z}}
\newcommand{\RR}{\mathbb{R}}
\newcommand{\CC}{\mathbb{C}}
\newcommand{\ba}{\mathbf{a}}
\title{{\large On the symmetry of finite sums of exponentials}}
\author{Florian Pausinger}
\address{School of Mathematics \& Physics, Queen's University Belfast, BT7 1NN, Belfast, United Kingdom.}
\email{f.pausinger@qub.ac.uk}
\author{Dimitris Vartziotis}
\address{NIKI Ltd. Digital Engineering, Research Center, Ethnikis Antistasis 205, 45500 Katsikas, Ioannina, Greece.}
\email{dimitris.vartziotis@nikitec.gr}
\date{\today}
\begin{document}

\maketitle


\begin{abstract}
In this note we are interested in the rich geometry of the graph of a curve $\gamma_{a,b}: [0,1] \rightarrow \CC$  defined as
\begin{equation*}
\gamma_{a,b}(t) = \exp(2\pi i a t)  + \exp(2\pi i b t),
\end{equation*}
in which $a,b$ are two different positive integers. It turns out that the sum of only two exponentials gives already rise to intriguing graphs. We determine the symmetry group and the points of self intersection of any such graph using only elementary arguments and describe various interesting phenomena that arise in the study of graphs of sums of more than two exponentials. \\[6pt]
\end{abstract}

\section{Introduction}
\label{sec1}

Complex exponentials $\exp(2 \pi i k)$, $k \in \ZZ$, play a crucial role in different areas of pure and applied mathematics. They are the main building blocks of Fourier series in classical harmonic analysis and appear in all kinds of important exponential sums; see f.e. \cite{grafakos2,grafakos1,olson} and references therein. Many deep problems in number theory are intimately linked to certain types of exponential sums and a lot of effort is put into finding precise bounds for the growth of such sums; see f.e. \cite{iwaniec,korobov,shparlinski} and references therein. We do not aim to give a comprehensive account of the importance of complex exponentials and exponential sums in mathematics; we rather wish to convey a feeling for the immense power and structural richness that can be encoded with sums of exponentials. It turns out that already the most simple object, namely the sum of two exponentials, gives a powerful method to illustrate symmetry groups in a very easy fashion.  The aim of this note is to study the graphs of curves defined via the sum of two exponentials; i.e. we determine the symmetry groups of such graphs and find all points of self intersection. Recently, also the graphs of infinite sums of such exponentials have been studied; see \cite{bohnet}. In the following we set up our notation in a slightly more general frame. This allows to formulate interesting problems in our final section which are beyond the scope of this paper.

Let $\ba = (a_0, \ldots, a_m)$ denote a vector of positive integers $a_0, \ldots, a_m$ with $m\geq 1$ and let $\gamma_{\ba}: \RR \rightarrow \CC$ be the (closed) curve defined as
\begin{equation} \label{def}
\gamma_{\ba}(t) = \exp(2\pi i a_0 t) + \ldots + \exp(2\pi i a_m t) = \sum_{j=0}^m \exp(2\pi i a_j t).
\end{equation}
Note that $\gamma_{\ba}$ is one-periodic, i.e., $\gamma_{\ba}(t)= \gamma_{\ba}(t+1)$ for all $t \in \RR$, since $\exp(2 \pi i a t)$, $t \in [0,1]$, $a \in \ZZ$, is a circle in the complex plane.
Moreover, we do not require the integers in $\ba$ to be distinct. If an integer appears more than once in $\ba$ then $\gamma_{\ba}$ corresponds to a weighted sum of exponentials (with integer weights).
Interestingly, the graph of $\gamma_{\ba}$ can get quite chaotic for an arbitrary choice of parameters. However, it turns out that this graph can also be highly symmetric; see Figure \ref{fig:motivation}.

In Section \ref{sec2} we study the relation between the symmetry of the graph of $\gamma_{\ba}$ and the structure of the generating vector $\ba$. 
We recall the concept of a symmetry group and determine generating vectors of arbitrary, but finite, length $m$ such that the symmetry groups of the corresponding graphs are the dihedral groups $D_n$ for $n \in \NN$; see Figure \ref{fig:sym}. Our theorem gives a complete description of the symmetry of the graph of $\gamma_{a,b}$ for distinct integers $a,b$.
In Section \ref{sec3} we study the self intersections of curves $\gamma_{a,b}$. The most interesting observation is that the arguments $t,t' \in [0,1]$ such that $\gamma_{a,b}(t)=\gamma_{a,b}(t')$ are of a very particular rational from.

\begin{figure}[h!]
\begin{center}
\includegraphics[scale=0.5]{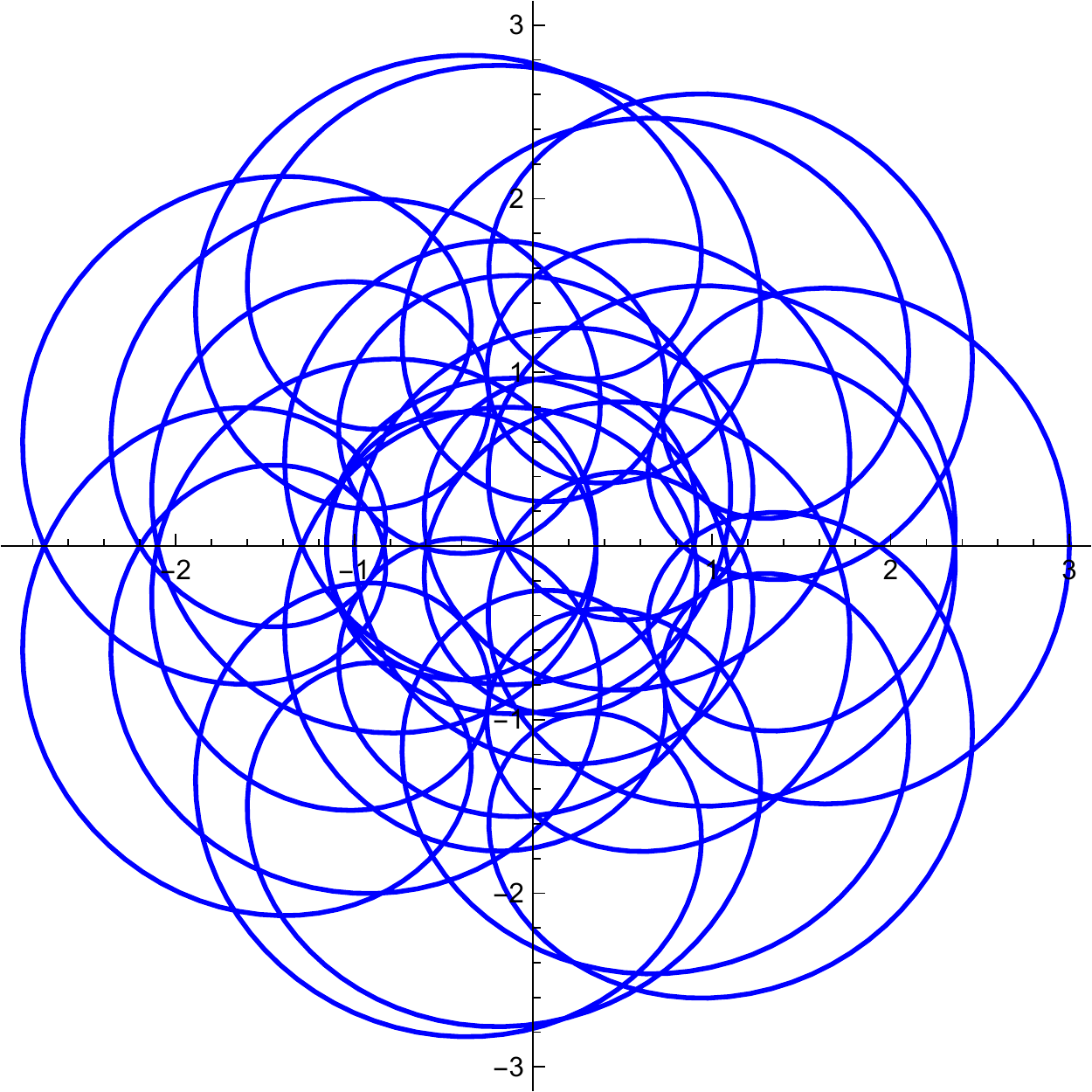}\
\includegraphics[scale=0.5]{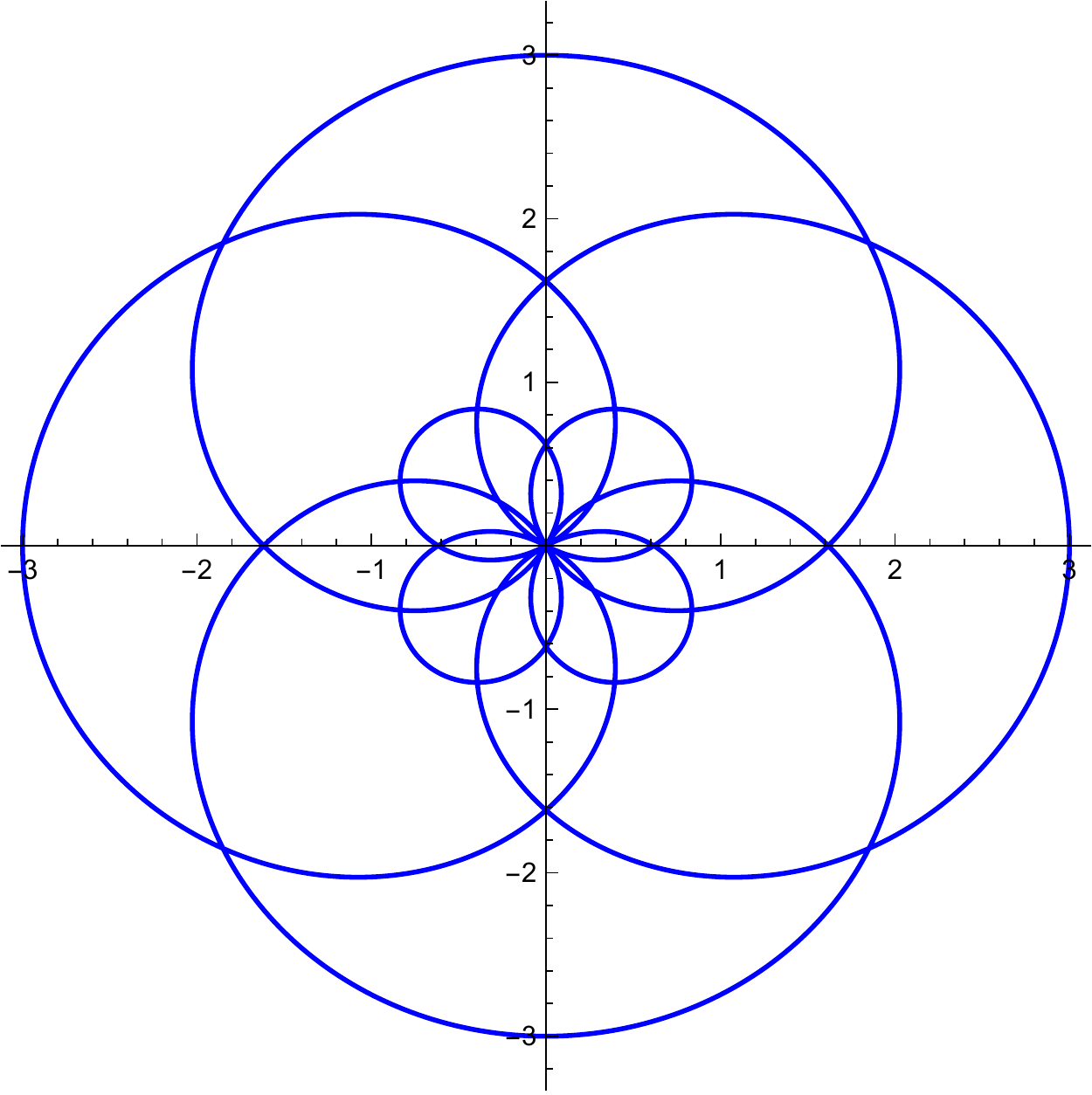}
\end{center}
\caption{The curves $\gamma_{2,7,31}$ and $\gamma_{1,5,9}$. } \label{fig:motivation}
\end{figure}

\section{Symmetry}
\label{sec2}

We start this section with a useful lemma, that allows to restrict attention to coprime tuples of numbers without loss of generality.

\begin{lemma}\label{lem:cancel}
Let $\ba=(a_0,a_1, \ldots, a_m)$ and let $q$ be a positive integer. Then $\gamma_{\ba}$ and $\gamma_{q\cdot \ba}$ have the same graph and
$$ \gamma_{\ba}(q t) = \gamma_{q\cdot \ba} (t).$$
\end{lemma}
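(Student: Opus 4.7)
The plan is to observe that the identity is a direct computation from the definition \eqref{def}, and then deduce the equality of graphs from the identity together with the one-periodicity already noted after \eqref{def}.

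First I would verify the identity $\gamma_{\ba}(qt)=\gamma_{q\cdot\ba}(t)$ by substituting into the definition: for each $j$ we have $\exp(2\pi i (qa_j) t) = \exp(2\pi i a_j (qt))$, and summing over $j=0,\ldots,m$ gives the claim. This is the only non-trivial line, and it really is just a rewriting of the exponent.

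Next I would address the equality of graphs, which I read as the equality of the image sets $\gamma_{\ba}(\RR)=\gamma_{q\cdot\ba}(\RR)$, or equivalently (by one-periodicity) $\gamma_{\ba}([0,1])=\gamma_{q\cdot\ba}([0,1])$. One inclusion is immediate from the identity just proved: every point $\gamma_{q\cdot\ba}(t)$ with $t\in[0,1]$ equals $\gamma_{\ba}(qt)$, and $qt\in[0,q]$, so by one-periodicity of $\gamma_{\ba}$ this value lies in $\gamma_{\ba}([0,1])$. For the reverse inclusion, given $s\in[0,1]$, set $t=s/q\in[0,1/q]\subseteq[0,1]$; then $\gamma_{\ba}(s)=\gamma_{\ba}(qt)=\gamma_{q\cdot\ba}(t)$, which lies in the image of $\gamma_{q\cdot\ba}$.

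There is no real obstacle here; the statement is essentially a sanity check that allows the authors in the sequel to normalise $\ba$ by dividing out the greatest common divisor. The only thing to be slightly careful about is not to conflate the \emph{graph} (image set) with the parametrisation: as parametrised curves the two differ by the affine reparametrisation $t\mapsto qt$, and the curve $\gamma_{q\cdot\ba}$ traverses the common image $q$ times faster (or, equivalently, $\gamma_{\ba}$ restricted to $[0,1]$ covers the loop once while $\gamma_{q\cdot\ba}$ restricted to $[0,1]$ covers it $q$ times). Making this distinction explicit in the write-up avoids any ambiguity.
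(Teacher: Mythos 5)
Your proof is correct and follows the same route as the paper: the identity $\gamma_{\ba}(qt)=\gamma_{q\cdot\ba}(t)$ is verified by direct substitution into the definition, and the equality of graphs follows from it via one-periodicity. The paper's own proof consists only of the one-line computation and leaves the deduction about the images to a remark afterwards, so your explicit treatment of the two inclusions is just a more careful write-up of the same argument.
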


\begin{proof}
We observe that 
\begin{equation*}
\gamma_{\ba}(q t) = \sum_{j=0}^m  \exp(2\pi i a_j q t) =\gamma_{q \cdot \ba}(t).
\end{equation*}
\end{proof}
Hence, the restriction of $\gamma_{q \cdot \ba}$ to $[0,1/q]$ gives the graph of $\gamma_{\ba}$ and so does the restriction to any other connected interval of length $1/q$.
In fact, the graph of $\gamma_{q \cdot \ba}$ covers the graph of $\gamma_{\ba}$ $q$ times for $t \in [0,1]$.

Next, we are interested in the symmetry of the graph of a curve $\gamma_{\ba}$. We use the concept of a \emph{symmetry group} to describe symmetric properties of a graph; at this point we would like to draw attention to the marvellous book of H. Weyl on symmetry \cite{weyl}. Let $G$ be a set and let $\cdot: G\times G \rightarrow G$ be a map that combines any two elements $a,b \in G$ to form another element $a\cdot b=c \in G$; i.e. the map is closed. The pair $(G,\cdot)$ is called a \emph{group} if 
\begin{enumerate}
\item the map is \emph{associative}, i.e. for all $a,b$ and $c \in G$, $(a\cdot b) \cdot c = a \cdot (b \cdot c)$;
\item there exists a unique \emph{identity element} $e$ such that $a\cdot e = e \cdot a = e$ holds for all $a \in G$;
\item for each $a \in G$ there exists an \emph{inverse element} $b \in G$, such that $a \cdot b = b \cdot a = e$.
\end{enumerate}
Now let $X \in \RR^2$ be the graph of a curve. The \emph{symmetry group} of $X$ consists of all transformations under which the object is \emph{invariant} (this means under which the object looks the same) with function composition as the group operation.
Important examples of symmetry groups in two dimensions are \emph{cyclic groups}, $C_n$, and \emph{dihedral groups}, $D_n$. Cyclic groups consist of all rotations about a fixed point by multiples of the angle $2\pi / n$; in the complex plane such a rotation is realized by multiplying every point of $X$ with $\exp(2\pi i j/n)$, for $1\leq j \leq n$. Dihedral groups contain $2n$ elements, namely the rotations in $C_n$ about a fixed point, together with reflections in $n$ axes that all pass through the fixed point of the rotations.
We illustrate these two different types of symmetries in Figure \ref{fig:symmetry}.
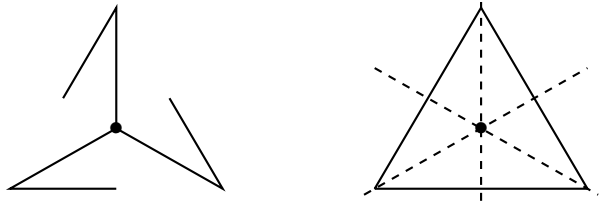
\begin{figure}[h!]
\begin{center}
\begin{tikzpicture}[scale=0.4]

\node at (3.5,2) {$\bullet$};
\draw[thick] (3.5,0) -- (0,0) -- (3.5,2)--(3.5,6) -- (1.75,3);
\draw[thick] (5.25,3) -- (7,0) -- (3.5,2);

\node at (15.5,2) {$\bullet$};
\draw[thick] (12,0) -- (19,0) -- (15.5,6)--(12,0);
\draw[thick, dashed] (11.65,-0.2) -- (19,4);
\draw[thick, dashed] (12,4) -- (19.35,-0.2);
\draw[thick, dashed] (15.5,-0.4) -- (15.5,6.5);

\end{tikzpicture}
\end{center}
\caption{\emph{Left:} Example of a shape with rotational symmetry. The symmetry group is $C_3$. \emph{Right:} The symmetry group of an equilateral triangle is $D_3$. The dashed lines should illustrate the axes of reflection.} \label{fig:symmetry}
\end{figure}

\begin{theorem} \label{thm:symmetry} Let $a, k, q, m \in \NN$ such that $a,k$ are coprime and set $q \cdot \ba=q \cdot (a,k+a,2k+a, \ldots, mk+a)$. Then the symmetry group of the graph of $\gamma_{q \cdot \ba}$ is the dihedral group $D_k$.
\end{theorem}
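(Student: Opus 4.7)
By Lemma~\ref{lem:cancel} the graph of $\gamma_{q\cdot\ba}$ coincides with that of $\gamma_{\ba}$, so I would work throughout with $\ba = (a, k+a, 2k+a, \ldots, mk+a)$ and denote the symmetry group of its image $\Gamma$ by $G$. The starting point is the factorisation
\[
\gamma_{\ba}(t) \;=\; \exp(2\pi i a t)\sum_{j=0}^{m}\exp(2\pi i j k t) \;=\; \exp(2\pi i a t)\, S(t),
\]
in which $S$ has period $1/k$. Consequently the parameter shift $t \mapsto t + 1/k$ leaves $S$ unchanged and multiplies the prefactor by $\exp(2\pi i a/k)$, so rotation by $2\pi a/k$ about the origin is a symmetry; coprimality of $a$ and $k$ then promotes this to all of the cyclic group $C_k$ of rotations by $2\pi/k$. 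The identity $\overline{\gamma_{\ba}(t)} = \gamma_{\ba}(-t)$ supplies the reflection in the real axis, and together these generators yield $D_k \subseteq G$.

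For the reverse inclusion I would isolate a canonical finite $G$-invariant subset of $\Gamma$. The triangle inequality gives $|\gamma_{\ba}(t)| \leq m+1$, with equality iff all $m+1$ unit summands are aligned, which forces $\exp(2\pi i k t) = 1$ and hence $t = \ell/k$ with $\ell \in \{0,\ldots,k-1\}$; by coprimality of $a$ and $k$ the resulting points $(m+1)\exp(2\pi i a \ell/k)$ form the vertices of a regular $k$-gon centred at the origin. To use these to bound $G$ I first need to show every $\phi \in G$ fixes the origin, for which I would argue as follows: if $r \in C_k \subseteq G$ denotes rotation by $2\pi/k$ about $0$, then $\phi r \phi^{-1}$ is a rotation by $\pm 2\pi/k$ about $\phi(0)$ (sign depending on the orientation of $\phi$), and a direct computation shows that composing $r$ with this conjugate (or with its inverse) produces a pure translation by $(\exp(2\pi i /k) - 1)\phi(0)$. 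Since $\Gamma$ is bounded, no nontrivial translation preserves it, so $\phi(0) = 0$ (provided $k \geq 2$). Then $G$ sits inside $O(2)$ and permutes the regular $k$-gon of maximum-modulus points, so $G \subseteq D_k$.

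I expect this ``anchoring'' step of forcing $\phi(0) = 0$ to be the main obstacle; once secured, the remainder reduces to the standard classification of finite subgroups of $O(2)$ preserving a regular $k$-gon. The degenerate case $k = 1$ needs a separate treatment because $C_1$ is trivial and the regular ``$1$-gon'' is just a point: here I would observe that $(m+1, 0)$ is the unique maximum-modulus point (hence fixed by every $\phi \in G$) and that the curve is smooth at that point with a purely imaginary, and hence uniquely defined, tangent direction, which rules out every nontrivial rotation about $(m+1,0)$ and leaves only the reflection in the real axis --- exactly $D_1$.
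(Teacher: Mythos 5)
Your proposal is correct and follows the same two-step skeleton as the paper (establish $D_k\subseteq G$, then cap $G$ using the points of maximal modulus), but both halves are executed by genuinely different means, and in each case your route is arguably cleaner. For the lower bound, the paper verifies invariance under rotation by $2\pi/k$ by writing out a system of congruences $1\equiv a y\pmod k$, $1\equiv(k+a)y\pmod k,\ldots$ and observing they collapse to a single solvable congruence; your factorisation $\gamma_{\ba}(t)=\exp(2\pi i a t)S(t)$ with $S$ of period $1/k$ makes the same rotation visible in one line and explains structurally why coprimality of $a$ and $k$ is what generates all of $C_k$. For the upper bound, the paper appeals to the classification of planar point groups (``only cyclic and dihedral'') and counts the points at maximal distance from the origin --- implicitly assuming every symmetry already fixes the origin; your conjugation-and-translation argument forcing $\phi(0)=0$ from boundedness of the graph supplies exactly the step the paper leaves tacit, after which restricting to $O(2)$ and to the regular $k$-gon of maximum-modulus points $(m+1)\exp(2\pi i a\ell/k)$ finishes the job. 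Note also that your equality case of the triangle inequality (modulus $m+1$, all $m+1$ summands aligned, forcing $t=\ell/k$) is the correct statement for general $m$, whereas the paper's displayed inequality only treats the two-term subsum with bound $2$.

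Two small points in the degenerate case $k=1$, where your anchoring argument is unavailable because $r$ is trivial. First, you identify the fixed point as ``the unique maximum-modulus point,'' but modulus is measured from the origin, which you have not yet shown to be fixed by $\phi$; you should instead pin the point down by an intrinsic invariant of the set $\Gamma$ (for instance as an endpoint of a diameter, or argue directly that $\Gamma$ lies in the closed disc of radius $m+1$ and touches its boundary only at $(m+1,0)$). Second, a uniquely defined tangent line at that point rules out rotations by angles other than $\pi$, but not the rotation by $\pi$ about $(m+1,0)$ itself, which preserves the tangent line; that rotation is instead excluded because it would carry $\Gamma$ into the disc of radius $m+1$ centred at $(2(m+1),0)$, which meets the original disc only at the fixed point. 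Both are easily repaired and do not affect the main case $k\geq 2$.
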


\begin{proof}
By Lemma \ref{lem:cancel} it suffices to study the case $\ba=(a,k+a,2k+a, \ldots, mk+a)$.
We prove the theorem in two steps. First, we show that $D_k$ is a subgroup of the symmetry group. In a second step we observe that $D_k$ contains already all invariant transformations of the image of $\gamma_{q \cdot \ba}$.

The group $D_k$ is a subgroup of the symmetry group if all rotations about a fixed point by multiples of the angle $2\pi/k$ leave the image invariant and if we find reflections in $k$ axes through the same fixed point.
We start with the rotations. We have to show that for all points $x = \gamma_{\ba}(t)$, $t\in [0,1]$, there exists a $t' \in [0,1]$ such that the point $\exp(2\pi i /k) x = \gamma_{\ba}(t')$. 
In the following we consider the special case $\ba=(1,k+1,2k+1, \ldots, mk+1)$.
Rotating a point $\gamma_{\ba}(t)$ by $\exp(2\pi i /k)$ leads to
\begin{align*}
\exp&(2\pi i /k) \cdot \gamma_{\ba}(t) =
\exp \left(2\pi i \left( t + \frac{1}{k} \right) \right) + \sum_{j=1}^{m} \exp \left(2\pi i (j k+1) \left( t + \frac{1}{(j k+1) k} \right) \right).
\end{align*}
To prove that the right hand side of the above equation lies again on $\gamma_{\ba}$ it suffices to find a $y$ such that $t'=t + \frac{y}{k}$ with
\begin{align*}
t + \frac{1}{k} & \equiv  t + \frac{y}{k} \pmod{1},\\
(k+1) \left( t + \frac{1}{k(k+1)} \right) & \equiv (k+1) \left( t + \frac{y}{k} \right) \pmod{1}, \\
&\vdots \\
(mk+1) \left( t + \frac{1}{(mk+1)k} \right) & \equiv  (mk+1) \left(  t + \frac{y}{k} \right) \pmod{1},\\
\end{align*}
Since $k$ and $m$ are integers, this amounts to finding a joint solution to the congruences
\begin{align*}
1 & \equiv y \pmod{k},\\
1 & \equiv (k+1) y \pmod{k}. \\
&\vdots \\
1 & \equiv (mk+1) y \pmod{k}.
\end{align*}
Obviously, $y=1$ is a valid joint solution to all these congruences. Hence, for every $t$ we can indeed find a $t'$ such that $\exp(2\pi i /k) \cdot \gamma_{\ba}(t) = \gamma_{\ba}(t')$.
Note that we can proceed in the same way in the general case. However, if $\ba=(a,k+a,2k+a, \ldots, mk+a)$, with $a>1$, we have to solve the congruence $1 \equiv ay \pmod{k}$. Since $a$ and $k$ are coprime we always find a solution (which will be different from $1$).

Turning to the reflections, it suffices to show that for every $x = \gamma_{\ba}(t)$, $t\in [0,1]$, there also exists a $t''$ such that $\bar{x} = \gamma_{\ba}(t'')$, in which $\bar{x}$ denotes the complex conjugate of $x$.
We recall that complex conjugation changes the sign of the exponent of an exponential. Thus $t$ maps to $-t$ and since our function is one-periodic, we can set $t''=1-t$ to see that complex conjugates of points in the image of $\gamma_{\ba}$ lie again on the curve.
This shows that $D_k$ is a subgroup of the full symmetry group and finishes the first step of our proof.

There are only two types of point symmetry groups in two dimensions; i.e. cyclic and dihedral groups. In particular, $D_k$ can only be a proper subgroup of a larger dihedral group of the form $D_{dk}$ for an integer $d>1$. If $D_k$ was a subgroup of a larger group then there have to be more invariant rotations. We will show that this is not the case with a simple geometric argument.

Recall that the absolute value $r=\sqrt{x^2+y^2}$ of a complex number $x+i y=r \exp(i \phi)$ denotes the Euclidean distance of the point $(x,y)$ to the origin. For two complex numbers $z_1=r_1 \exp(i \phi_1)$ and $z_2=r_2 \exp(i \phi_2)$ with $z_1+z_2=r \exp(i \phi)$ we get by the triangle inequality
$$  r \leq r_1 + r_2,  $$
with equality if and only if $\phi_1=\phi_2 \pmod{2\pi}$; see Figure \ref{fig:sum}. 
In particular, 
$$\exp(2 \pi i a t)+\exp(2 \pi i (a+k) t) = r \exp(i \phi),$$
with $0\leq r \leq 2$. In particular, we find that $r=2$ if and only if $2 \pi a t \equiv 2 \pi (a+k) t \pmod{2\pi}$ which is only satisfied if $t=y/k$ for an integer $y$. Thus, there are only $k$ points with maximal distance of $2$ to the origin on the graph of our curve; see Figure \ref{fig:sym}. Hence, there can not be more than $k$ invariant rotations and therefore $D_k$ is indeed the full symmetry group of the graph.
\end{proof}

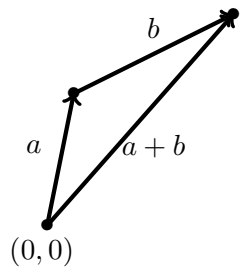
\begin{figure}[h!]
\begin{center}
\begin{tikzpicture}[scale=0.35]

\node at (0,0) {$\bullet$};
\node at (1,5) {$\bullet$};
\node at (7,8) {$\bullet$};
\draw[ultra thick, ->] (0,0) -- (1,5);
\draw[ultra thick, ->] (1,5)--(7,8);
\draw[ultra thick, ->] (0,0)--(7,8);

\node at (-0.5,3) {$a$};
\node at (4,7.5) {$b$};
\node at (4,3) {$a+b$};
\node at (-0.2,-1) {$(0,0)$};

\end{tikzpicture}
\end{center}
\caption{Two vectors $a$ and $b$ and their sum $a+b$. The length of $a+b$ is maximal if $a$ and $b$ point in the same direction.} \label{fig:sum}
\end{figure}

Together with Lemma \ref{lem:cancel} this theorem suffices to completely describe the symmetry of sums of two exponentials $\gamma_{a,b}$ for arbitrary integers $a$ and $b$. We can reduce the pair $(a,b)$ via Lemma \ref{lem:cancel} to a pair of coprime integers and apply the following corollary.

\begin{corollary} \label{cor:2d}
For coprime integers $a,b \in \NN$, with $b>a$ set $\ba = (a, b)$. If $a=1$, the symmetry group of the image of $\gamma_{\ba}$ is $D_{b-1}$. Otherwise the symmetry group of the image is $D_{b-a}$.
\end{corollary}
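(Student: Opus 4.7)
The corollary follows directly from Theorem \ref{thm:symmetry} applied with $m=1$. My plan is to match the vector $(a,b)$ to the arithmetic progression pattern of the theorem, verify the coprimality hypothesis, and then read off the symmetry group.

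The first step is to rewrite $\ba = (a,b)$ as $(a, a+(b-a))$, which is exactly the form $(a, k+a, 2k+a, \ldots, mk+a)$ required by Theorem \ref{thm:symmetry}, with $m=1$, $k = b-a$, and trivial multiplier $q=1$. Next, the coprimality hypothesis $\gcd(a,k) = 1$ needs to be checked: any common divisor of $a$ and $b-a$ also divides $a + (b-a) = b$, so since $\gcd(a,b)=1$ we obtain $\gcd(a, b-a) = 1$, as required.

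Invoking Theorem \ref{thm:symmetry} with these parameters then yields that the symmetry group of the graph of $\gamma_{a,b}$ is $D_k = D_{b-a}$. Specializing to $a=1$ gives $D_{b-1}$, which accounts for the first clause of the statement. There is no real obstacle here; the corollary is essentially a repackaging of the $m=1$ case of Theorem \ref{thm:symmetry}, with the coprimality hypothesis on $(a,k)$ replaced by the more natural assumption $\gcd(a,b)=1$, and the preceding reduction via Lemma \ref{lem:cancel} already explains how an arbitrary pair is brought into this coprime normal form.
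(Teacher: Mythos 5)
Your proof is correct and follows essentially the same route as the paper: both rewrite $(a,b)$ as $(a,(b-a)+a)$ and apply Theorem \ref{thm:symmetry} with $m=1$ and $k=b-a$. Your explicit verification that $\gcd(a,b)=1$ implies $\gcd(a,b-a)=1$ is a welcome detail the paper leaves implicit.
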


\begin{proof}
In the first case we have that $(a, b)=(1,b) = (1, (b-1)+1)$. In the second case we have that $(a, b)=(a, (b-a) + a )$.
The claims then follow from Theorem \ref{thm:symmetry}.
\end{proof}

\begin{figure}[h!]
\begin{center}
\includegraphics[scale=0.5]{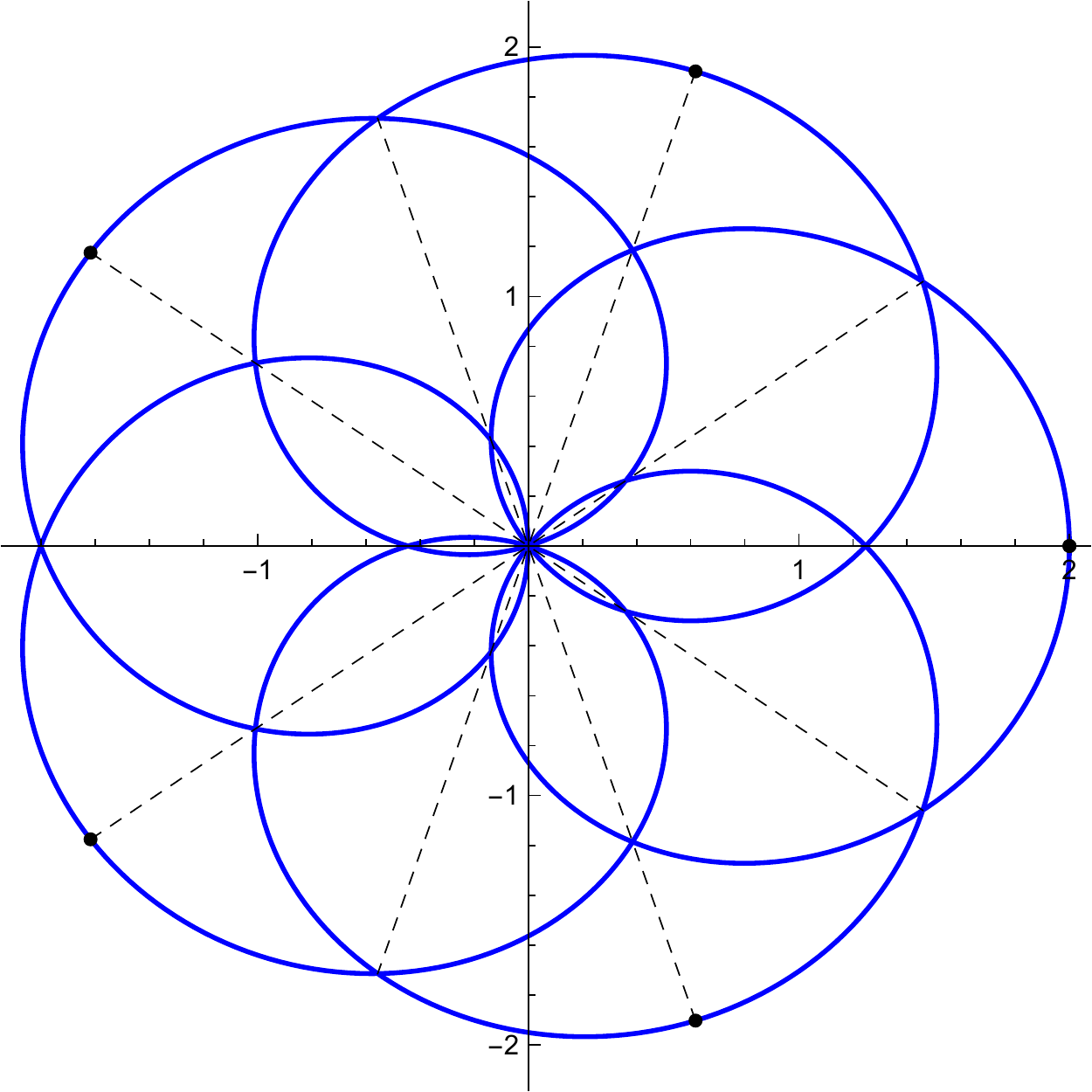}\
\includegraphics[scale=0.5]{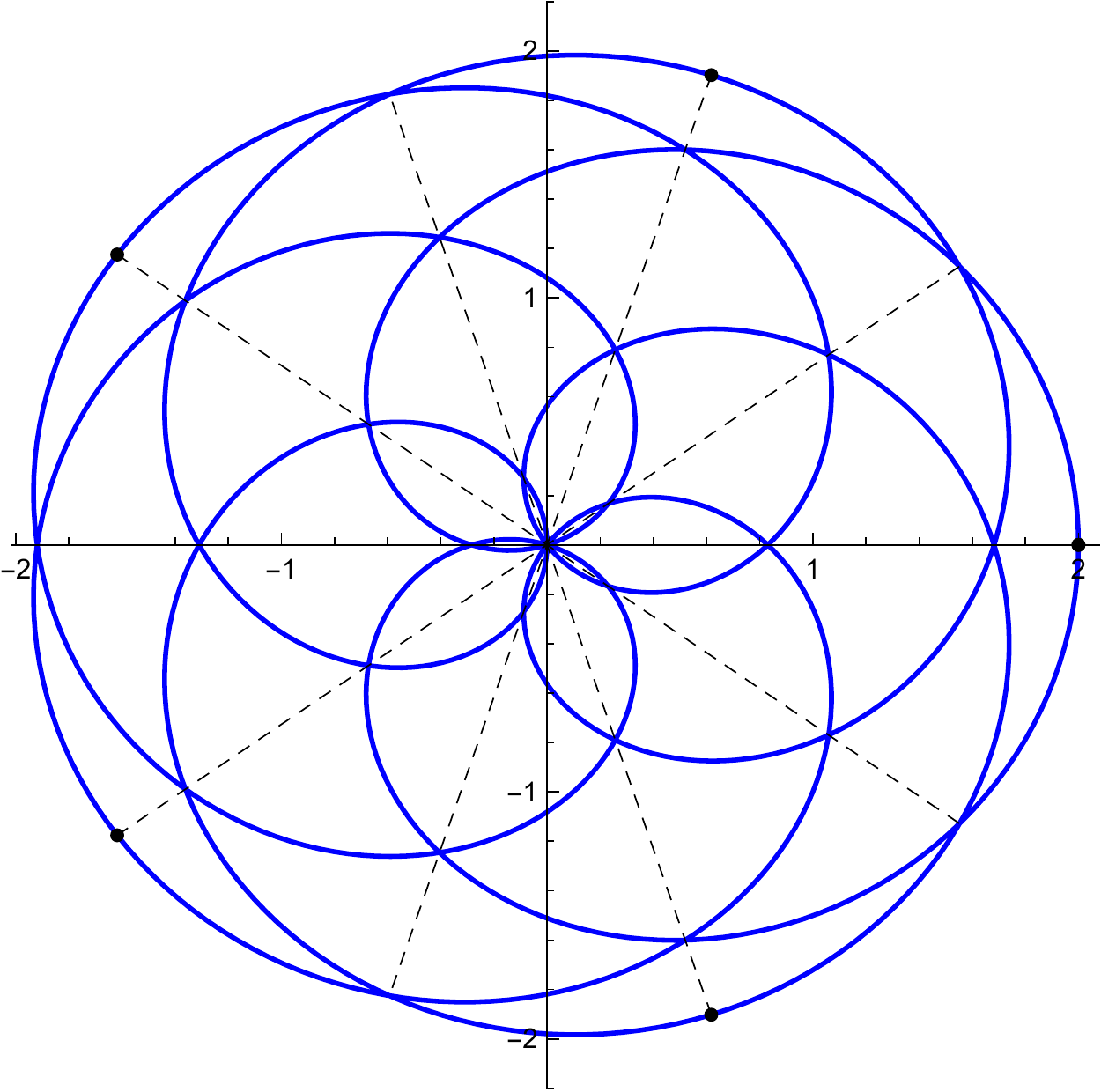}
\end{center}
\caption{The curves $\gamma_{1,6}$ and $\gamma_{3,8}$ and their reflection lines. The black dots are the $5$ points with maximal distance $2$ from the origin.} \label{fig:sym}
\end{figure}





\section{Self Intersections}
\label{sec3}

Our next goal is to understand the self intersections of $\gamma_{a,b}$ for coprime $a$ and $b$. We prove that such curves self intersect in a very structured way. Because of the symmetry of the graph, it suffices to study what happens in the \emph{wedge} spanned by the real line and by the line through the origin that is obtained by rotating the real line by an angle of $2\pi/(b-a)$. We prove the following theorem for the special case $\ba=(1,k+1)$, $k\geq 4$, and comment on the general case in Remark \ref{rem1}.

\begin{theorem} \label{thm:intersections}
Let $k\geq4$ be an integer and $\ba=(1,k+1)$. If $\gamma_{\ba}(t)\neq 0$ and if $\gamma_{\ba}(t)=\gamma_{\ba}(t')$ with $t, t' \in [0,1]$ and $t \neq t'$  then there exist integers $0\leq j,j' \leq k(k+2)$ such that $t=j/(k (k+2))$ and $t'=j'/(k (k+2))$.
\end{theorem}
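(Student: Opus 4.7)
The plan is to reduce the self-intersection equation $\gamma_{\ba}(t)=\gamma_{\ba}(t')$ to a single trigonometric identity in the coordinates $u := t-t'$ and $v := t+t'$, and then to read off the denominators of $u$ and $v$ by a short case analysis. First I would apply the identity $e^{i\alpha}-e^{i\beta} = 2i\sin\bigl((\alpha-\beta)/2\bigr)\, e^{i(\alpha+\beta)/2}$ to each of the two differences $e^{2\pi i t}-e^{2\pi i t'}$ and $e^{2\pi i(k+1)t}-e^{2\pi i(k+1)t'}$. After cancelling the common factor $2i e^{i\pi v}$, the condition $\gamma_{\ba}(t)=\gamma_{\ba}(t')$ becomes equivalent to
\[
\sin(\pi u) + \sin(\pi(k+1)u)\, e^{i\pi k v} = 0.
\]

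Assuming $\sin(\pi(k+1)u) \neq 0$, the factor $e^{i\pi k v}$ must equal the real number $-\sin(\pi u)/\sin(\pi(k+1)u)$ and hence is $\pm 1$; in particular $|\sin(\pi u)| = |\sin(\pi(k+1)u)|$. The standard identities for $\sin A = \pm\sin B$ then force $u$ into one of four families: either $u=2n/k$ or $u=(2n+1)/(k+2)$ (the case $\sin(\pi u)=\sin(\pi(k+1)u)$, which gives $e^{i\pi k v}=-1$ and hence $v=(2m+1)/k$), or $u=2n/(k+2)$ or $u=(2n+1)/k$ (the case $\sin(\pi u)=-\sin(\pi(k+1)u)$, which gives $e^{i\pi k v}=1$ and hence $v=2m/k$), with $n,m\in\ZZ$. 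The degenerate case $\sin(\pi(k+1)u)=0$ additionally forces $\sin(\pi u)=0$, so $u\in\ZZ$, giving $\{t,t'\}=\{0,1\}$, which is trivially of the claimed form.

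The conclusion then follows by recovering $t=(u+v)/2$ and $t'=(v-u)/2$ in each of the four combinations. Two of them---those in which $u$ and $v$ both have denominator $k$---yield $t=(2n+2m+1)/(2k)$, and the direct computation
\[
\gamma_{\ba}(t) = e^{\pi i (2n+2m+1)/k}\bigl(1 + (-1)^{2n+2m+1}\bigr) = 0
\]
shows that these are precisely the combinations excluded by the hypothesis $\gamma_{\ba}(t)\neq 0$. In the remaining two combinations one of $u,v$ has denominator $k$ and the other $k+2$, and a quick parity check on the numerators of $t=(u+v)/2$ and $t'=(v-u)/2$ confirms that both belong to $\tfrac{1}{k(k+2)}\ZZ$, which is the desired conclusion. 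The only non-mechanical step, and the main obstacle I anticipate, is precisely the identification of the two ``bad'' combinations above with the zero locus of $\gamma_{\ba}$; this is what makes the hypothesis $\gamma_{\ba}(t)\neq 0$ exactly sharp, removing the spurious self-intersections of denominator $2k$ while retaining all the genuine ones of denominator $k(k+2)$.
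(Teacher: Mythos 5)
Your proof is correct, and it takes a genuinely different route from the paper. The paper's argument is geometric: it combines the symmetry group $D_k$ from Theorem \ref{thm:symmetry}, the classification of real-axis crossings (Lemmas \ref{lem:iffpoints} and \ref{lem:samepoints}), and the injectivity and convexity of the arcs $\gamma|_{\mathcal{I}_j}$ (Lemmas \ref{lem5} and \ref{lem6}) to count and locate the intersections inside a fundamental wedge, with the final step resting on a convexity argument and ``a direct calculation.'' Your approach instead factors $\gamma_{\ba}(t)-\gamma_{\ba}(t')$ via $e^{i\alpha}-e^{i\beta}=2i\sin((\alpha-\beta)/2)e^{i(\alpha+\beta)/2}$ and reduces everything to the single identity $\sin(\pi u)+\sin(\pi(k+1)u)e^{i\pi k v}=0$ in $u=t-t'$, $v=t+t'$; the case analysis on $e^{i\pi kv}=\pm1$ and on $\sin A=\pm\sin B$ then yields the four families, and the parity computations you indicate do check out (the numerators of $(u+v)/2$ and $(v-u)/2$ are even in the two surviving combinations for every $k$, and the two excluded combinations give $t$ of the form $(\text{odd})/(2k)$, which is exactly the zero set of $\gamma_{\ba}$ identified in Lemma \ref{lem:iffpoints}). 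What your approach buys: it is shorter and fully rigorous at the final step (where the paper leans on an informal convexity/counting argument), it does not use the hypothesis $k\geq 4$ at all, and it generalizes transparently to coprime $\ba=(a,b)$ (factor out $e^{i\pi a v}$ to get $\sin(\pi a u)+\sin(\pi b u)e^{i\pi(b-a)v}=0$), which the paper only sketches in Remark \ref{rem1}. What the paper's approach buys is geometric information your argument does not directly give: the count of exactly $k$ interior intersections per wedge and their interpretation as midpoints of the convex arcs $\mathcal{I}_j$. If you write this up, spell out the step asserting that $\sin(\pi(k+1)u)\neq 0$ forces $\sin(\pi u)\neq 0$ (otherwise $e^{i\pi kv}$ would have to vanish), since as phrased the claim that the ratio ``is $\pm1$'' silently uses $|e^{i\pi kv}|=1$.
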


We illustrate Theorem \ref{thm:intersections} in Figure \ref{fig:intersections}. The proof of the theorem requires a bit of preparation. We first state several observations in separate lemmas before we bring all these ideas together in the proof of Theorem  \ref{thm:intersections} at the end of this section.
In Lemma \ref{lem:iffpoints} we determine all values of $t$ such that the real or imaginary part of $\gamma_{a,b}(t)$ equals zero. 

\begin{figure}[h!]
\begin{center}
\includegraphics[scale=0.333]{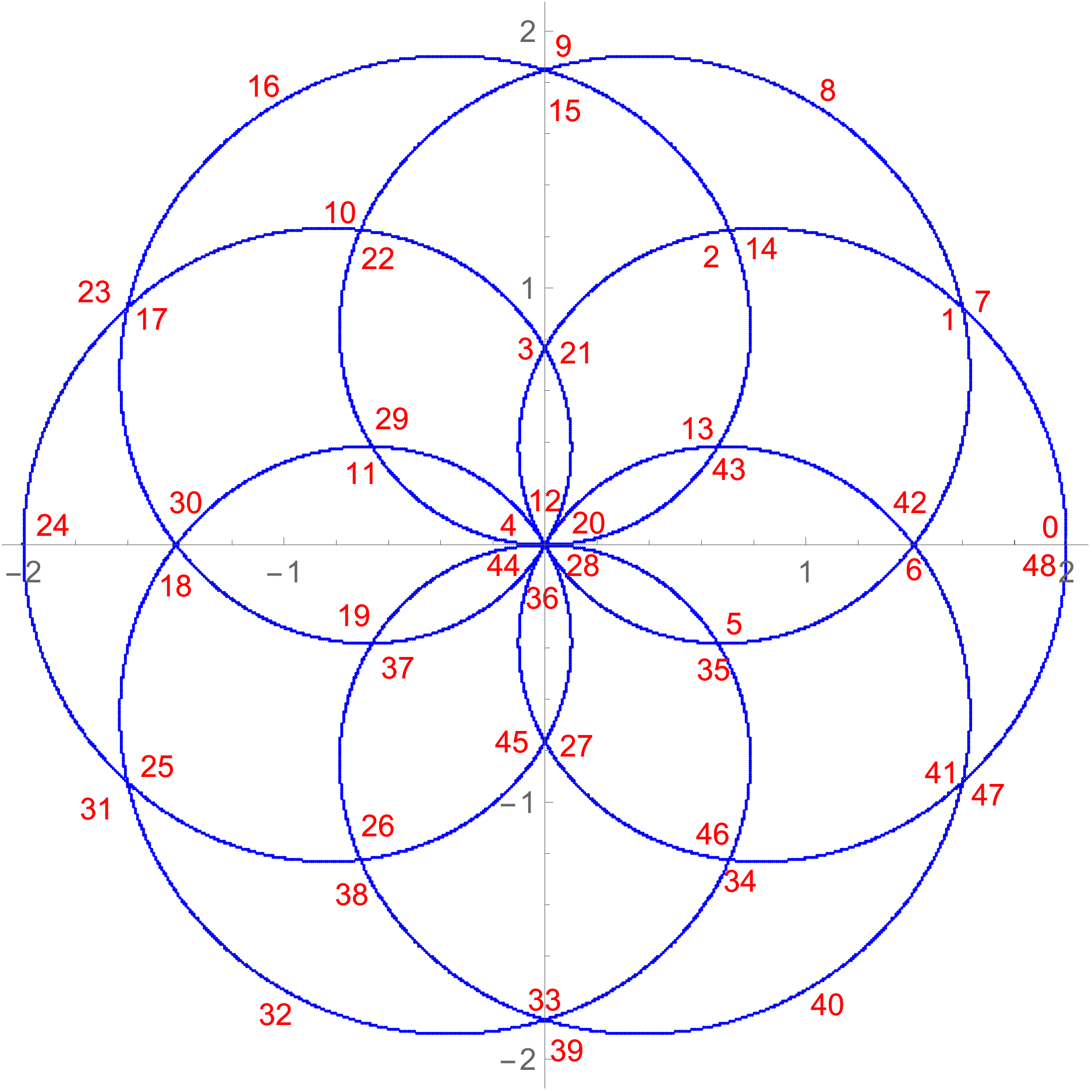}
\includegraphics[scale=0.5]{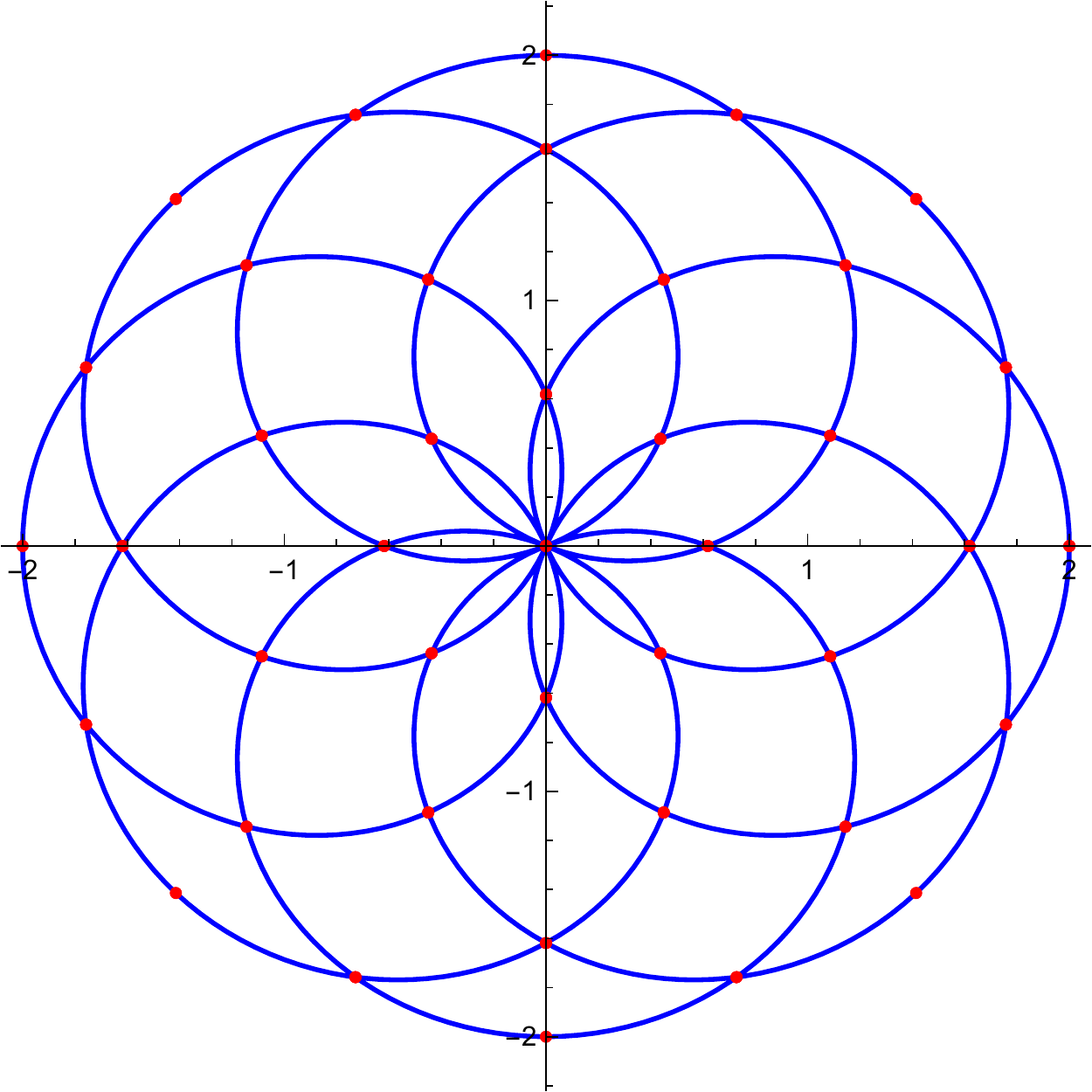}
\end{center}
\caption{Left: Plot of the curve $\gamma_{1,7}$. The integers are the numerators $j$ in $t=j/48$ of the points of self intersection. Right: Plot of the curve $\gamma_{1,9}$ in which the dots are at $t=j/80$.} \label{fig:intersections}
\end{figure}

\begin{lemma}\label{lem:iffpoints}
Let $a,b$ be coprime integers and $t \in [0,1]$. Set $\bar{t}_j=\frac{j}{2(a+b)}$, $j=0,\ldots, 2(a+b)$ and $\hat{t}_h= \frac{h}{2(b-a)}$ for $h=0,\ldots, 2(b-a)$. Then $\mathrm{Im}(\gamma_{a,b}( t ) ) = 0$ if and only if $t=\bar{t}_j$ for even $j$ or $t=\hat{t}_h$ for odd $h$. Moreover, $\mathrm{Re}(\gamma_{a,b}( t) ) = 0$ if and only if $t=\bar{t}_j$ for odd $j$ or $t=\hat{t}_h$ for odd $h$. In particular, $\gamma_{a,b}(t) =0$ if and only if $t=\hat{t}_h$ for odd $h$.
\end{lemma}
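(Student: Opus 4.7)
The plan is to write the real and imaginary parts of $\gamma_{a,b}(t)$ as products via the elementary sum-to-product identities
\begin{equation*}
\cos\alpha + \cos\beta = 2\cos\tfrac{\alpha+\beta}{2}\cos\tfrac{\alpha-\beta}{2},\qquad \sin\alpha + \sin\beta = 2\sin\tfrac{\alpha+\beta}{2}\cos\tfrac{\alpha-\beta}{2}.
\end{equation*}
Applied with $\alpha = 2\pi a t$ and $\beta = 2\pi b t$, these identities factor $\mathrm{Re}(\gamma_{a,b}(t)) = 2\cos(\pi(a+b)t)\cos(\pi(b-a)t)$ and $\mathrm{Im}(\gamma_{a,b}(t)) = 2\sin(\pi(a+b)t)\cos(\pi(b-a)t)$. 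Once this factorisation is in place, the rest is a bookkeeping exercise, since a product vanishes exactly when one of its factors vanishes.

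Next I would read off the zero set of each of the three factors and match it to the indexing in the statement. The factor $\sin(\pi(a+b)t)$ vanishes precisely on the points $t = k/(a+b) = \bar{t}_{2k}$, i.e.\ at the $\bar{t}_j$ with $j$ even. The factor $\cos(\pi(a+b)t)$ vanishes precisely on $t = (2k+1)/(2(a+b)) = \bar{t}_{2k+1}$, i.e.\ at the $\bar{t}_j$ with $j$ odd. Finally, $\cos(\pi(b-a)t)$ vanishes precisely on $t = (2k+1)/(2(b-a)) = \hat{t}_{2k+1}$, i.e.\ at the $\hat{t}_h$ with $h$ odd. Combining these three observations with the factorisations above yields both of the stated ``if and only if'' characterisations.

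For the final assertion I would argue that $\gamma_{a,b}(t) = 0$ forces both the real and imaginary part to vanish simultaneously. In the factored form, this means that either the common factor $\cos(\pi(b-a)t)$ is zero --- which gives exactly $t = \hat{t}_h$ for odd $h$ --- or one has $\sin(\pi(a+b)t) = 0 = \cos(\pi(a+b)t)$, which is ruled out by the Pythagorean identity $\sin^2 + \cos^2 = 1$. I do not anticipate any real obstacle here: everything reduces to the standard zero sets of $\sin$ and $\cos$. The only mild subtlety is the bookkeeping of the parities of $j$ and $h$, and it is worth noting that the coprimality of $a$ and $b$ is not actually used in this lemma; it will enter only later, when one wants to determine which $\bar{t}_j$ and $\hat{t}_h$ coincide.
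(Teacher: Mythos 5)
Your proposal is correct, and it takes a genuinely different (and arguably cleaner) route than the paper. The paper proves the two directions separately: sufficiency by substituting $\bar{t}_j$ and $\hat{t}_h$ directly into $\gamma_{a,b}$ and simplifying (e.g.\ obtaining $\gamma_{a,b}(\bar{t}_{2q})=2\cos(2\pi qb/(a+b))$, a value it reuses later in the proof of the self-intersection theorem), and necessity by a case analysis that translates $\sin(2\pi ta)+\sin(2\pi tb)=0$ and $\cos(2\pi ta)+\cos(2\pi tb)=0$ into congruences modulo $2\pi$. Your sum-to-product factorisation
$\mathrm{Re}(\gamma_{a,b}(t))=2\cos(\pi(a+b)t)\cos(\pi(b-a)t)$ and
$\mathrm{Im}(\gamma_{a,b}(t))=2\sin(\pi(a+b)t)\cos(\pi(b-a)t)$
is an identity, so both directions of each equivalence drop out simultaneously from the standard zero sets of $\sin$ and $\cos$, and the final assertion follows from the shared factor $\cos(\pi(b-a)t)$ together with $\sin^2+\cos^2=1$. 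This buys brevity and makes transparent your (correct) observation that coprimality is irrelevant here; it also sidesteps the multi-branch analysis of when a sum of two sines or cosines vanishes, which in the paper's converse argument requires some care to enumerate all solution branches. The only thing you give up is negligible: the explicit value $2\cos(2\pi qb/(a+b))$ at the real-axis points, which your factorisation in fact also yields after the identity $2\pi qb/(a+b)=\pi q+\pi q(b-a)/(a+b)$. (Implicitly both proofs assume $a\neq b$, say $b>a$, so that $\hat{t}_h$ is defined.)
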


\begin{proof}
We first show that the real and imaginary parts vanish if we set $t=\bar{t}_j$ resp. $t=\hat{t}_h$ for $j$ and $h$ as stated. Observe that $\frac{a}{a+b} = 1 - \frac{b}{a+b}$. Now let $j=2q$ be even, then
\begin{align*}
\gamma_{a,b}(\bar{t}_j) &= \exp \left(2 \pi i a \frac{q}{a+b} \right) + \exp \left(2 \pi i b \frac{q}{a+b} \right) \\
&= \exp \left( 2 \pi i q \left(1- \frac{b}{a+b} \right) \right) + \exp \left(2 \pi i q \frac{b}{a+b} \right) \\
&= \exp \left( 2 \pi i q \right) \exp \left(- 2 \pi i q\frac{b}{a+b}  \right) + \exp \left( 2 \pi i q \frac{b}{a+b} \right) \\
&= 2 \cos \left(\frac{2 \pi q b}{a+b} \right).
\end{align*}
And similarly for $j=2q+1$ as well as for $\hat{t}_h$, which proves the first direction.

To prove the other direction, we first assume that $\gamma_{a,b}(t)=0$. This means
\begin{equation*}
\cos (2 \pi t a) + \cos (2 \pi t b)=0, \ \ \ \ \text{ and } \ \ \ \ \sin (2 \pi t a) + \sin (2 \pi t b) =0,
\end{equation*}
and can only be satisfied if $2\pi t a + \pi \equiv 2 \pi t b \pmod{2 \pi}$. Consequently, $ 1 \equiv 2 t (b-a) \pmod{2}$ and hence $t$ must be of the form $j/(2 (b-a))$ for an odd integer $j$.

Now assume that $\mathrm{Im}(\gamma_{a,b}( t ) ) = 0$ and $\mathrm{Re}(\gamma_{a,b}( t ) ) \neq 0$. Then we get that
\begin{equation*}
\sin (2 \pi t a) + \sin (2 \pi t b) =0.
\end{equation*}
Using $\sin(\varphi)=-\sin (-\varphi)$ we get,
$- 2 \pi t a \equiv 2 \pi t b \pmod{2 \pi}$ which implies $t (a+b) \equiv 0 \pmod{1}$ and therefore $t=j/(a+b)$ for an integer $j$.
Similarly, assume that $\mathrm{Re}(\gamma_{a,b}( t ) ) = 0$ and $\mathrm{Im}(\gamma_{a,b}( t ) ) \neq 0$. Then
\begin{equation*}
\cos (2 \pi t a) + \cos (2 \pi t b)=0.
\end{equation*}
Using $\cos(\pi/2+ \varphi) = - \cos(\pi/2 - \varphi)$, we get that $\pi \equiv 2 \pi t(a+b) \pmod{2\pi}$ which implies $1 \equiv 2 t(a+b) \pmod{2}$ and hence $t=\frac{j}{2(a+b)}$ for an odd integer $j$.
\end{proof}

In a next step we determine the points of self intersection of $\gamma_{a,b}$ that lie on the real axis. The symmetry of the graph of $\gamma_{a,b}$ implies then that there are also points of self intersection on every line that is a rotation of the real line by an angle $2\pi j/(b-a)$. These are the lines of points $r \cdot \exp(2\pi i j/(b-a) )$ with $r \in \RR$ and $1\leq j \leq b-a$.

\begin{lemma}\label{lem:samepoints}
Let $a, b$ be coprime integers and let $t=j/(a+b)$ and $t'= j'/(a+b)$, with $0< j ,j' < a+b$ and $j\neq j'$. Then we have $\gamma_{a,b}(t)=\gamma_{a,b}(t')$ if and only if $j+j'=a+b$.
\end{lemma}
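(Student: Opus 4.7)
The strategy is to exploit the fact that both $t$ and $t'$ are of the special form $j/(a+b)$, so by Lemma~\ref{lem:iffpoints} (with $j$ even in the $\bar t_j$ parametrisation) we know that $\gamma_{a,b}(t)$ and $\gamma_{a,b}(t')$ both lie on the real axis and admit a clean closed form. More precisely, the computation at the beginning of the proof of Lemma~\ref{lem:iffpoints} gives
\[
\gamma_{a,b}\!\left(\tfrac{j}{a+b}\right) = 2\cos\!\left(\tfrac{2\pi j b}{a+b}\right),
\]
and similarly for $j'$. So the identity $\gamma_{a,b}(t)=\gamma_{a,b}(t')$ is equivalent to the equality of two cosines, which in turn reduces to the congruence $jb \equiv \pm j' b \pmod{a+b}$.

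Next, I would use that $\gcd(a,b)=1$ implies $\gcd(b,a+b)=\gcd(b,a)=1$, so $b$ is invertible modulo $a+b$. Cancelling $b$ in the congruence yields the two alternatives $j\equiv j'\pmod{a+b}$ and $j\equiv -j'\pmod{a+b}$. The first alternative, together with $0<j,j'<a+b$, forces $j=j'$, which is excluded by hypothesis. The second alternative reads $j+j'\equiv 0\pmod{a+b}$, and since $0<j+j'<2(a+b)$ the only possibility is $j+j'=a+b$. This proves the forward direction.

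For the converse, assume $j+j'=a+b$. Then $j'b=(a+b)b-jb\equiv -jb\pmod{a+b}$, so $\cos(2\pi j'b/(a+b))=\cos(2\pi jb/(a+b))$, and the closed-form expression above gives $\gamma_{a,b}(t)=\gamma_{a,b}(t')$. (One can also argue geometrically: $t'=1-t$ composed with the $a\leftrightarrow b$ symmetry shows that the two complex numbers are complex conjugates of one another, which, being real, means they are equal.)

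The calculation is essentially routine once one notices that all the points $j/(a+b)$ are sent to the real axis; the only place where care is needed is the coprimality step $\gcd(b,a+b)=1$ that allows one to cancel $b$ in the congruence. This is the pivotal observation, but it is a one-line consequence of $\gcd(a,b)=1$, so I do not expect any serious obstacle in executing the plan.
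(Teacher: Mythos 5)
Your proposal is correct and follows essentially the same route as the paper: both reduce to the closed form $2\cos(2\pi jb/(a+b))$ via Lemma \ref{lem:iffpoints}, translate equality of cosines into the congruence $jb\equiv\pm j'b\pmod{a+b}$, and cancel $b$ using $\gcd(b,a+b)=1$. The only cosmetic difference is that the paper proves the ``if'' direction via the complex-conjugation argument ($t+t'=1$) that you mention parenthetically, while you run the cosine congruence in both directions; either is fine.
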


\begin{proof}
First, if $j+j'=a+b$, then $t+t'=1$ and thus $\gamma_{a,b}(t)$ is the complex conjugate of $\gamma_{a,b}(t')$. Since the imaginary parts are 0 by Lemma \ref{lem:iffpoints}, we get $\gamma_{a,b}(t)=\gamma_{a,b}(t')$.

Second, assume $\gamma_{a,b}(t)=\gamma_{a,b}(t')$. By Lemma \ref{lem:iffpoints} we know that the imaginary parts are both 0 and we get
\begin{equation} \label{eq:equal}
2 \cos \left( \frac{2 \pi j b}{a+b} \right) = 2 \cos \left( \frac{2 \pi j' b }{a+b} \right).
\end{equation}
This equality can only hold if the arguments $x,y$ of the cosines either satisfy $x \equiv y \pmod{2 \pi}$ or $x+y \equiv 0 \pmod{2\pi}$.
First, we assume that the arguments are the same modulo $2\pi$. In this case we can rewrite \eqref{eq:equal} as
\begin{equation*}
\frac{(j - j') b}{a+b} \equiv 0 \pmod{1}.
\end{equation*}
However, $b$ and $(a+b)$ are coprime since $a$ and $b$ are coprime and $0<j,j'< a+b$, such that our assumptions only allow for the trivial solution $j=j'$. 
The second possibility to satisfy \eqref{eq:equal} leads to
\begin{equation} \label{cond1}
\frac{(j + j') b}{a+b} \equiv 0 \pmod{1}.
\end{equation}
By our assumptions, there is only one solution for \eqref{cond1} in this case, namely $j+j' = a+b$.
\end{proof}

So far, we know all points of self intersection that lie on the real line or on a rotation of it by an element of the symmetry group. However, there might be more. To determine them, we study the shape of the curve in intervals of the form $\mathcal{I}_j = [t_j, t_j+ 2/(b^2 - a^2)]$, with $t_j = j/(a+b)$, $j\in \NN$ and $0\leq j \leq a+b$; see Figure \ref{fig:restriction}.

\begin{lemma} \label{lem5}
Let $a,b$ be coprime with $b-a \geq 4$ and let $t_j=j/(a+b)$, with $j\in \NN$ and $0\leq j \leq a+b$. Then $\gamma_{a,b}: \mathcal{I}_j \rightarrow \CC$ is injective for $t \in \mathcal{I}_j=[t_j, t_j + 2/(b^2 - a^2)]$.
\end{lemma}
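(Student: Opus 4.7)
The plan is to exploit the polar-like decomposition
\begin{equation*}
\gamma_{a,b}(t) \;=\; e^{2\pi i a t} + e^{2\pi i b t} \;=\; e^{\pi i(a+b)t}\bigl(e^{-\pi i(b-a)t} + e^{\pi i(b-a)t}\bigr) \;=\; 2\cos\bigl(\pi(b-a)t\bigr)\,e^{\pi i(a+b)t},
\end{equation*}
which splits $\gamma_{a,b}$ into a real amplitude and a phase factor moving at uniform speed $\pi(a+b)$. Given this, suppose $\gamma_{a,b}(t)=\gamma_{a,b}(t')$ for $t,t'\in\mathcal{I}_j$. Taking absolute values gives $|\cos(\pi(b-a)t)|=|\cos(\pi(b-a)t')|$, and dividing out the common modulus (assuming it is nonzero; the vanishing case is handled separately using Lemma~\ref{lem:iffpoints}) produces exactly two scenarios according to the sign comparison of the two cosines.

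In the first scenario the two cosines are equal and the phase factors must also agree, i.e.\ $e^{\pi i(a+b)t}=e^{\pi i(a+b)t'}$, which forces $(a+b)(t-t')\in 2\mathbb{Z}$ and hence $|t-t'|\geq 2/(a+b)$ whenever $t\neq t'$. In the second scenario the two cosines are negatives of each other and the phases must differ by $\pi$, i.e.\ $e^{\pi i(a+b)t}=-e^{\pi i(a+b)t'}$, which yields $(a+b)(t-t')\in 2\mathbb{Z}+1$ and consequently $|t-t'|\geq 1/(a+b)$. I would then simply compare both lower bounds to the length of $\mathcal{I}_j$: since $b-a\geq 4$,
\begin{equation*}
|t-t'| \;\leq\; \frac{2}{b^{2}-a^{2}} \;=\; \frac{2}{(b-a)(a+b)} \;\leq\; \frac{1}{2(a+b)} \;<\; \frac{1}{a+b},
\end{equation*}
so both scenarios collapse to $t=t'$.

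The only subtle point is the degenerate case where $\cos(\pi(b-a)t)=0$, i.e.\ $\gamma_{a,b}(t)=0$. Here the phase is undefined, but Lemma~\ref{lem:iffpoints} already pinpoints all such $t$ as $t=h/(2(b-a))$ with $h$ odd, and two consecutive zeros are separated by $1/(b-a)$, which is much larger than the length $2/((b-a)(a+b))$ of $\mathcal{I}_j$. Hence at most one zero of $\gamma_{a,b}$ can lie in $\mathcal{I}_j$, so if $\gamma_{a,b}(t)=\gamma_{a,b}(t')=0$ inside $\mathcal{I}_j$ we must have $t=t'$. This closes the argument. The main obstacle is really just this book-keeping around the sign of the cosine and the vanishing case; the metric comparison is routine once the factorization has been exposed and the hypothesis $b-a\geq 4$ has been invoked to make $\mathcal{I}_j$ short enough to rule out both phase-jump possibilities.
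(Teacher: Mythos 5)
Your proof is correct and follows essentially the same route as the paper: both rest on the factorization $\gamma_{a,b}(t)=2\cos(\pi(b-a)t)\,e^{\pi i(a+b)t}$ and on the observation that equality of two points on $\mathcal{I}_j$ forces $(a+b)(t-t')$ to be an integer, which is impossible on an interval of length $2/((b-a)(a+b))\le 1/(2(a+b))$ unless $t=t'$. The only difference is cosmetic --- the paper compares phases via the $\pi$-periodicity of the tangent of the argument rather than via modulus and sign of the cosine --- and your explicit handling of the degenerate case $\cos(\pi(b-a)t)=0$ is in fact slightly more careful than the paper's tangent argument, which is vacuous at the origin.
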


\begin{proof}
Setting $\gamma_{a,b}(t) =  (\mathrm{Re}(\gamma_{a,b}(t)), \mathrm{Im}(\gamma_{a,b}(t)) )$, we consider $\gamma_{a,b}$ as a function in $\RR^2$. 
In particular we have that
\begin{align*}
\mathrm{Re}(\gamma_{a,b}(t_j+x/(b^2-a^2))) &= 2 \cos \left( \pi j + \pi \frac{x}{b-a} \right) \cos \left( \pi (a-b) \frac{j(b-a) + x}{b^2 - a^2} \right)\\
\mathrm{Im}(\gamma_{a,b}(t_j+x/(b^2-a^2))) &= 2 \sin \left( \pi j + \pi \frac{x}{b-a} \right) \cos \left( \pi (a-b) \frac{j(b-a) + x}{b^2 - a^2} \right).
\end{align*}

To show that the restrictions of $\gamma_{a,b}(t)$ to intervals $[t_j, t_j+2/(b^2-a^2)]$ are injective we assume there are two values $x, x' \in [0,2]$ such that
\begin{equation} \label{tangens1} \gamma_{a,b}(t_j+x/(b^2-a^2)) = \gamma_{a,b}(t_j+x'/(b^2-a^2)). \end{equation}
Note that $-2\leq x-x' \leq 2$. From the assumption \eqref{tangens1} it follows that
\begin{equation} \label{tangens2}
\tan \left( \pi j + \pi \frac{x}{b-a} \right) = \tan \left( \pi j + \pi \frac{x'}{b-a} \right),
\end{equation}
since $\tan(\alpha)=\cos(\alpha)/\sin(\alpha)$. However, the tangens is a $\pi$-periodic function. This means that \eqref{tangens2} can only hold if the two arguments are the same modulo $\pi$:
\begin{equation*}
\pi j + \pi \frac{x}{b-a} \equiv  \pi j + \pi \frac{x'}{b-a} \pmod{\pi},
\end{equation*}
which can be reduced to
\begin{equation*}
\frac{x-x'}{b-a} \equiv 0 \pmod{1}.
\end{equation*}
Since $b-a \geq 4$ and $-2\leq x-x' \leq 2$ this can only hold for $x=x'$.
\end{proof}

Importantly, we can say even more about the curve in the intervals where it is injective, namely that the curves are also convex.

\begin{lemma} \label{lem6}
For coprime integers $a,b$ and $t_j$ as above, the curve $\gamma_{a,b}: \mathcal{I}_j \rightarrow \CC$ is convex for $t \in \mathcal{I}_j=[t_j, t_j + 2/(b^2 - a^2)]$.
\end{lemma}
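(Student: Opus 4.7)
The plan is to recognize that on each interval $\mathcal{I}_j$ the curve has a very clean polar form, and then derive convexity from a short curvature calculation.

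First, I would rewrite the sum of exponentials using the identity $e^{i\beta} + e^{i\gamma} = 2\cos((\beta-\gamma)/2)\,e^{i(\beta+\gamma)/2}$ with $\beta = 2\pi b t$ and $\gamma = 2\pi a t$, obtaining
\begin{equation*}
\gamma_{a,b}(t) = 2\cos(\pi(b-a)t)\,e^{i\pi(a+b)t}.
\end{equation*}
Substituting $t = t_j + x/(b^2-a^2)$ and setting $\theta = \theta(x) := \pi j + \pi x/(b-a)$ together with $\alpha := (b-a)/(b+a)$, a one-line simplification of the radial and angular factors (matching the expressions for $\mathrm{Re}$ and $\mathrm{Im}$ already computed in the proof of Lemma \ref{lem5}) yields
\begin{equation*}
\gamma_{a,b}(t_j + x/(b^2-a^2)) = 2\cos(\alpha\theta)\,e^{i\theta}.
\end{equation*}
Since $\theta(x)$ is an affine bijection of $[0,2]$ onto $[\pi j, \pi j + 2\pi/(b-a)]$, I can reparametrize by the polar angle $\theta$ and view the restriction of $\gamma_{a,b}$ to $\mathcal{I}_j$ as the polar curve $r(\theta) = 2\cos(\alpha\theta)$.

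Second, I would apply the standard polar curvature formula
\begin{equation*}
\kappa(\theta) = \frac{r^2 + 2(r')^2 - r r''}{(r^2 + (r')^2)^{3/2}}.
\end{equation*}
With $r = 2\cos(\alpha\theta)$, $r' = -2\alpha\sin(\alpha\theta)$, and $r'' = -2\alpha^2\cos(\alpha\theta)$, the numerator collapses to
\begin{equation*}
4(1+\alpha^2)\cos^2(\alpha\theta) + 8\alpha^2\sin^2(\alpha\theta),
\end{equation*}
which is strictly positive for every $\theta$ because $\sin^2 + \cos^2 \equiv 1$. The denominator $(4\cos^2(\alpha\theta) + 4\alpha^2\sin^2(\alpha\theta))^{3/2}$ is likewise strictly positive. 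Hence the signed curvature $\kappa(\theta)$ is strictly positive throughout $\mathcal{I}_j$, which is the standard criterion for an arc to be strictly convex.

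The only slightly delicate bookkeeping concerns the possibility that $\cos(\alpha\theta)$ vanishes somewhere in the interval, so that $r(\theta)=0$ and the curve passes through the origin. Even there the velocity $\mathbf{c}'(\theta) = (r' + i r)e^{i\theta}$ is nonzero, because $r$ and $r'$ cannot simultaneously vanish, so the curve remains a regular smooth arc and the polar curvature formula continues to apply without modification. Apart from this minor check, the heart of the argument is just the one-line curvature computation; I do not anticipate a genuine obstacle. The key insight that makes the proof essentially trivial is that on each $\mathcal{I}_j$ the restriction of $\gamma_{a,b}$ is nothing but the polar arc $r = 2\cos(\alpha\theta)$, independently of $j$.
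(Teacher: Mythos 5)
Your proof is correct and follows essentially the same route as the paper: both establish convexity by showing that the signed curvature of the arc is strictly positive, the paper via a direct Cartesian computation in the parameter $t$ giving $\kappa_{a,b}(t)=\bigl(a^3+b^3+ab(a+b)\cos(2\pi(a-b)t)\bigr)/\bigl(a^2+b^2+2ab\cos(2\pi(a-b)t)\bigr)^{3/2}>0$, and you via the polar curvature formula after the (correct) observation that the arc is the polar curve $r(\theta)=2\cos(\alpha\theta)$ with $\alpha=(b-a)/(b+a)$. The two computations are equivalent up to the affine reparametrization $\theta=\pi(a+b)t$, and your extra check of regularity at points where $r=0$ is sound.
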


\begin{proof}
A regular plane simple (injective) curve is convex if and only if its curvature is either always non-negative or always non-positive. We can calculate the curvature $\kappa_{a,b}(t)$ of our parameterized curve $(x(t),y(t))=(\mathrm{Re}(\gamma_{a,b}(t)) , \mathrm{Im}(\gamma_{a,b}(t)) )$ as
\begin{align*}
\kappa_{a,b}(t) = \frac{x'(t)y''(t) - y'(t)x''(t)}{(x'(t)^2 + y'(t)^2 )^{3/2}} = \frac{a^3 + b^3 + a b (a + b) \cos( 2\pi (a-b)t  )  }{ (a^2 + b^2 + 2 a b \cos( 2 \pi (a-b)t ))^{3/2} } > 0,
\end{align*}
since $-1\leq \cos(x)\leq 1$ for all $x$ and $a^3 + b^3 - a b (a + b)=(a-b)^2 (a+b)$ and $a^2 + b^2 - 2 a b = (a-b)^2$.
\end{proof}

\begin{proof}[Proof of Theorem \ref{thm:intersections}]
In the following we use the shortcut $\gamma:=\gamma_{(1,k+1)}$.
By Theorem \ref{thm:symmetry} the graph of $\gamma$ has the symmetry group $D_k$. By Lemma \ref{lem:iffpoints} we know that if $t_j= j/(k+2)$, $j\in \NN$, then $\mathrm{Im}(\gamma(t_j) ) =0$ and
$$\gamma(t_j)= 2 \cos \left( 2\pi \frac{j(k+1)}{k+2}\right).$$ 
Now we observe that $j (k+1) \equiv -j \pmod{k+2}$. In other words, for increasing $j$, with $0 \leq j \leq \left \lfloor (k+2)/2 \right \rfloor$, we get a decreasing sequence of equally spaced arguments for the cosine. 
By the symmetry of the cosine we get
\begin{align*} 
 2\cos(0) > 2 \cos \left ( 2 \pi \frac{-1}{k+2} \right) &= 2 \cos \left ( 2 \pi \frac{1}{k+2} \right) \\
> 2 \cos \left (2 \pi \frac{-2}{k+2} \right)&=2 \cos \left (2 \pi  \frac{2}{k+2} \right) > \ldots > 2 \cos \left (\pi \right)
\end{align*}
The above can be rewritten as:
\begin{equation*}
\gamma(0) > \gamma \left ( \frac{1}{k+2} \right) = \gamma \left ( \frac{-1}{k+2} \right) > 
\gamma \left ( \frac{2}{k+2} \right)=\gamma \left ( \frac{-2}{k+2} \right) > \ldots > \gamma \left (\left \lfloor \frac{k+2}{2} \right \rfloor \frac{1}{k+2} \right).
\end{equation*}
Rotating a complex number around the origin does not change the distance to the origin and we know by Theorem \ref{thm:symmetry} that
$$ \exp\left( 2\pi i /k \right) \gamma(t) = \gamma\left( t + \frac{1}{k} \right);$$
i.e., all points $\gamma \left ( \frac{\pm j}{k+2} \right)$ on the real line have the corresponding point $\gamma \left ( \frac{\pm j}{k+2} + \frac{1}{k} \right)$ on the line $r \cdot \exp\left( 2 \pi i / k \right)$. 
Importantly, we have that
\begin{equation} \label{eq1} t_j + \frac{1}{k}=\frac{j}{k+2} + \frac{1}{k} = \frac{(j+1)k+2}{k(k+2)} = t_{j+1}+ \frac{2}{k(k+2)}\end{equation}
and
\begin{equation} \label{eq2} t_{-j} + \frac{1}{k} = \frac{-j}{k+2} + \frac{1}{k} = \frac{(-j+1)k+2}{k(k+2)} = t_{-j+1} + \frac{2}{k(k+2)}. \end{equation}
Our first observation is now that if $\gamma(j/(k+2))=\gamma(-j/k+2)$, then 
$\gamma\left( \frac{(j+1)k+2}{k(k+2)} \right)=\gamma \left( \frac{(-j+1)k+2}{k(k+2)} \right).$
We recall from Lemma \ref{lem6} that $\gamma$ is injective and convex for $t \in \mathcal{I}_j=[t_j, t_j + 2/(k(k+2))]$.
Then the second observation is that $\gamma$ maps the right endpoints of all intervals $\mathcal{I}_j$ to the rotated line, while the left endpoints are mapped to the real line.
Since the $t_j$ are the only values $t$ such that $\gamma(t)$ lies on the real line and is nonzero, we conclude that all pieces of $\gamma$ that lie within the wedge spanned by the two lines can be obtained via restricting $t \in [0,1]$ to the intervals $\mathcal{I}_j$; see also Figure \ref{fig:restriction}.

\begin{figure}[h!]
\begin{center}
\includegraphics[scale=0.5]{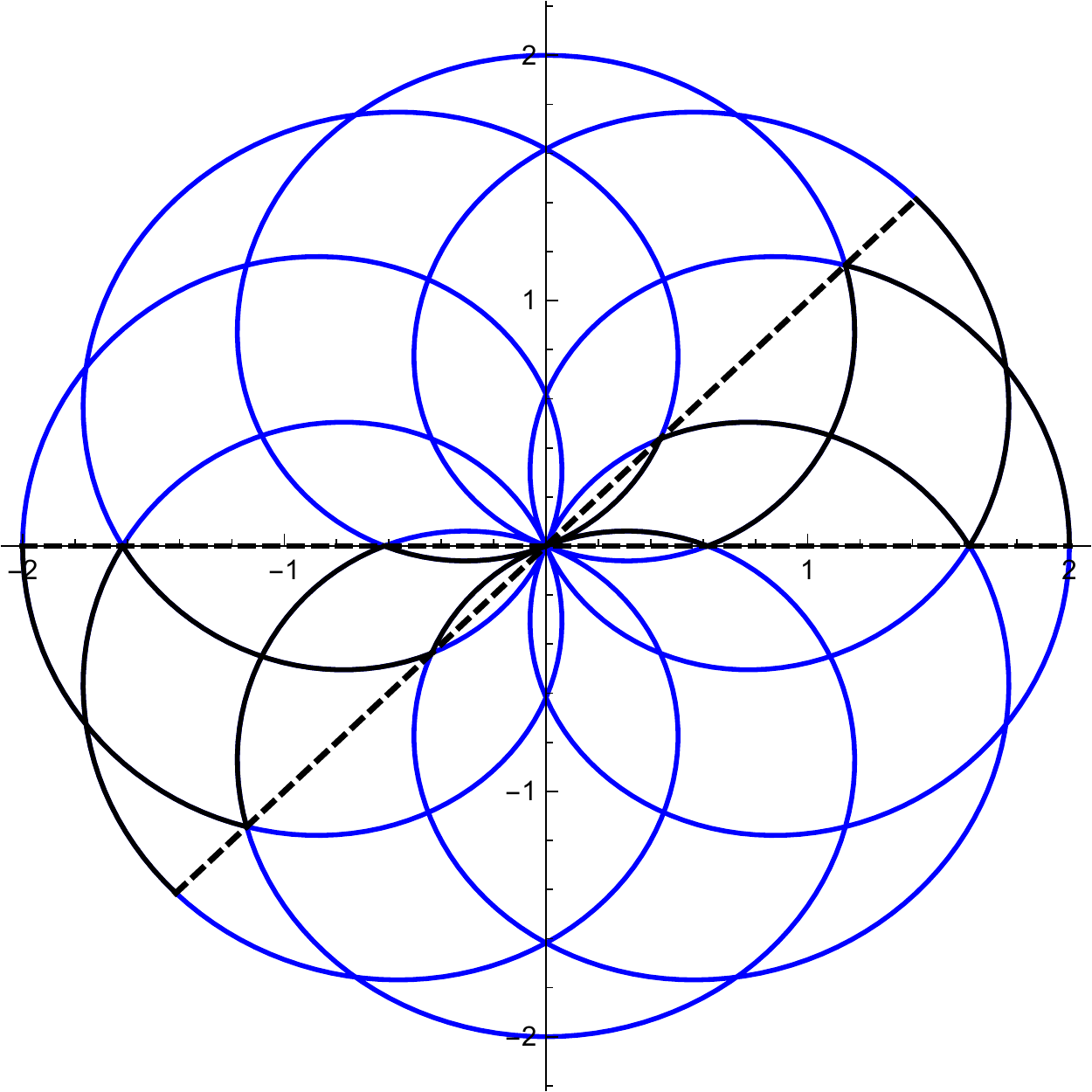}
\end{center}
\caption{Plot of the curve $\gamma_{1,9}$. The black parts show the restrictions of the curve to values of $t$ in intervals $\mathcal{I}_j$.} \label{fig:restriction}
\end{figure}

In particular we see, that if two left interval endpoints have the same image on the real line, the corresponding images of the right endpoints differ on the rotated line and vice versa.
By \eqref{eq1} and \eqref{eq2} it follows that rotating $\gamma(t_j)$ resp $\gamma(t_{-j})$ gives exactly the image of the right endpoints of $\mathcal{I}_{j+1}$ resp $\mathcal{I}_{-j+1}$; i.e. of the two intervals whose left endpoints are neighbors of $t_{\pm j}$ on the real line. 
Furthermore, the rotated images of $\gamma(t_{j-1})$ and $\gamma(t_{-j-1})$ are exactly the images of the right endpoints of $\mathcal{I}_{j}$ and $\mathcal{I}_{-j}$.
Hence, by convexity of the restrictions of the curve, we see that the curve must self intersect exactly $k$ times in the interior of the wedge and a direct calculation shows that the points of self intersection are exactly at the mid points of the intervals $\mathcal{I}_j$, i.e. at $ t_j + 1/(k(k+2))$.
\end{proof}

\begin{remark} \label{rem1}
This theorem also holds in the general case $\ba=(a,b)$; i.e. all arguments of points of self intersection are of the form $j/(b^2-a^2)$ for $0\leq j \leq b^2-a^2$.
However, the proof is more technical in the general case since the points $\gamma(t_j)$ are ordered in a more involved way. While the order and correspondence of the points in the above proof can easily be described via the congruence $j(k+1)\equiv -j \pmod{k+2}$, the general equation reads as $j b \equiv - aj \pmod{a+b}$.
For the sake of clarity of our exposition, we restricted the theorem to the case $(1,k+1)$. However, note that we proved all lemmas for the general case.
\end{remark}

\section{Outlook}
\label{sec4}

We conclude our exposition with a short outlook. One interesting direction for future research could be the study of weighted sums of exponentials. Starting again with the simplest possible case, we already obtain very interesting images when considering curves $\gamma_{a,b,b}$ as in Figure \ref{fig:weighted} (left). 
One obvious difference is that all the points on the curve have a certain minimal distance to the origin, while the graph seems to preserve the symmetry of $\gamma_{a,b}$.

A second intriguing question is to describe the points of self intersecting of general curves $\gamma_{\ba}$ as shown in Figure \ref{fig:weighted}(right). It seems that if $\ba$ has a structure as in Theorem \ref{thm:symmetry} such curves still intersect in a very well structured way; see Figure \ref{fig:motivation}(right). However, already in this case there seem to be some points of self intersection, which turn out to be very hard to describe in an explicit fashion.

\begin{figure}[h!]
\begin{center}
\includegraphics[scale=0.5]{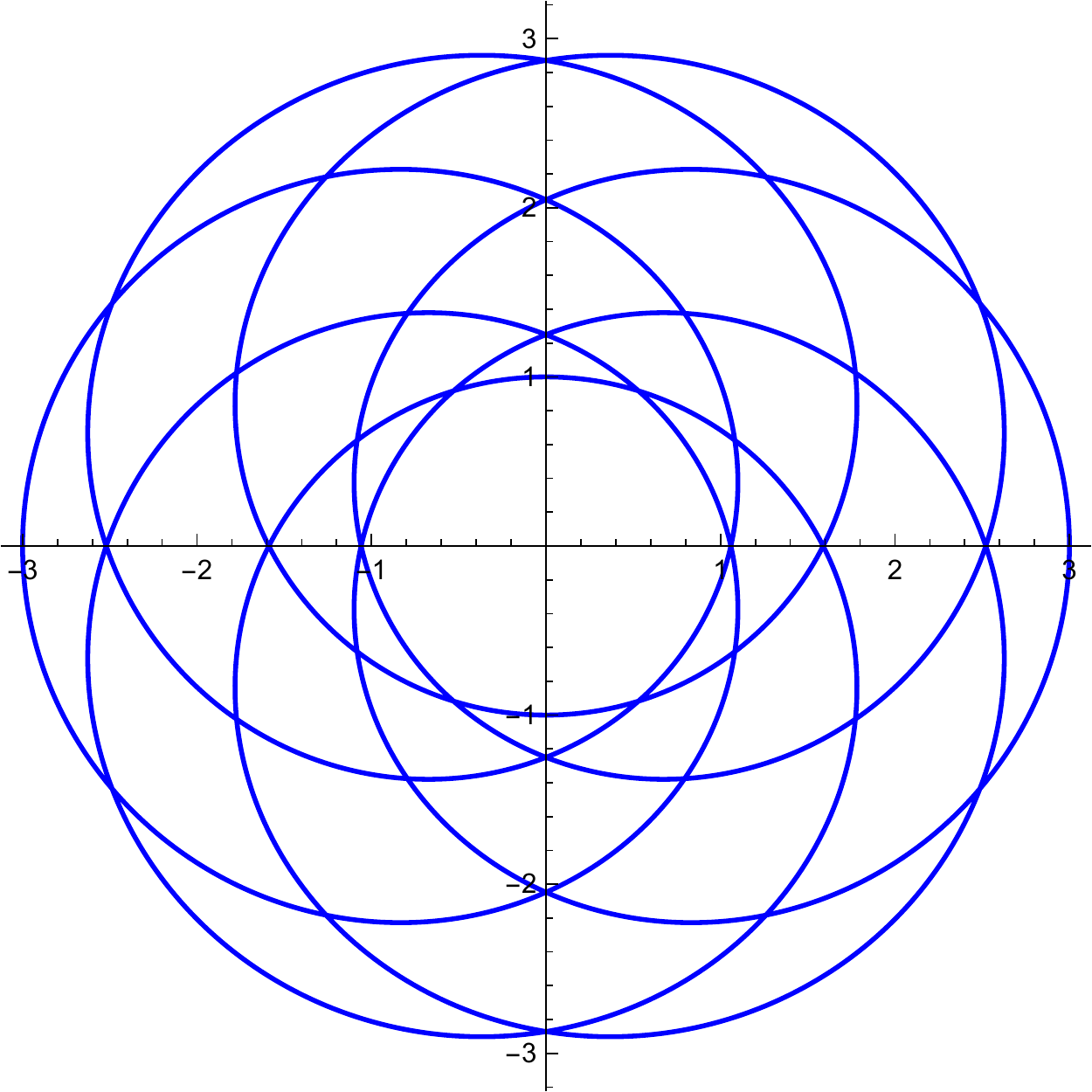}\ \
\includegraphics[scale=0.5]{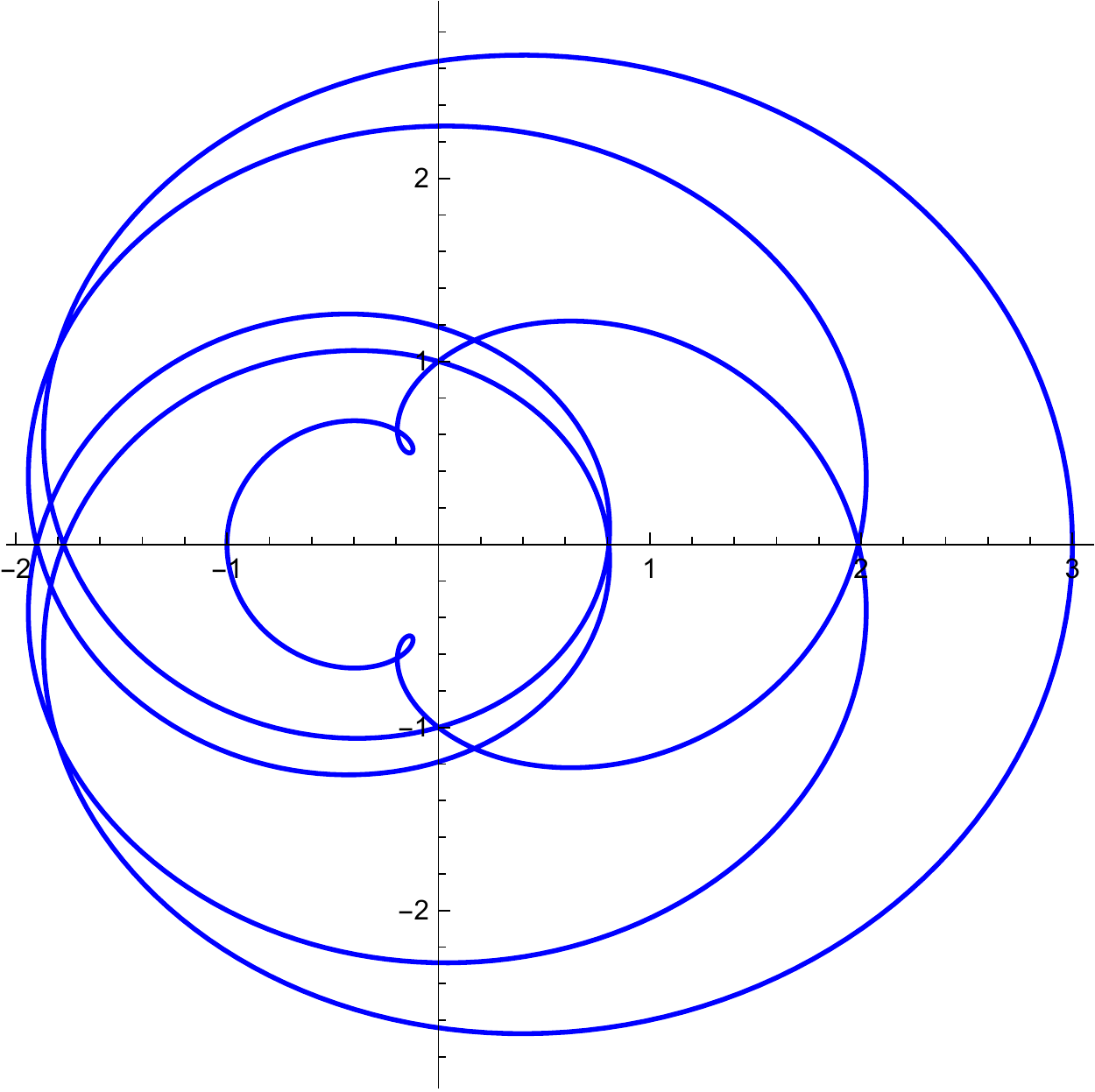}
\end{center}
\caption{Left: Plot of the curve $\gamma_{1,7,7}$. Right: Plot of the curve $\gamma_{3,6,7}$} \label{fig:weighted}
\end{figure}

\bibliographystyle{plain}
\bibliography{arxiv_Paus_Vartz_Color}


\end{document}